\newtheorem{theorem}{Theorem}
\newtheorem*{definition*}{Definition}
\newbox\gnBoxA
\newdimen\gnCornerHgt
\newdimen\gnArgHgt
\def\Godelnum #1{%
	\setbox\gnBoxA=\hbox{$#1$}%
	\gnArgHgt=\ht\gnBoxA%
	\ifnum \gnArgHgt<\gnCornerHgt
		\gnArgHgt=0pt%
	\else
		\advance \gnArgHgt by -\gnCornerHgt%
	\fi
	\raise\gnArgHgt\hbox{$\ulcorner$} \box\gnBoxA %
		\raise\gnArgHgt\hbox{$\urcorner$}}
\title{We Belong Together? A plea for modesty in modal plural logic}
\author{Simon Thomas Hewitt}
\begin{document}
\maketitle

`We belong together', a doey eyed lover says to his beloved. She, being both better educated in a certain sort of metaphysics than her suitor and more romantic, responds `Yes. Not only in the actual world, but in every possible world. There is no possible world in which we are not together'. Whether this is genuinely touching or simply emetic is a fine judgement. Perhaps more remarkably this expression of lovestruck hyperbole bears a striking similarity to a principle enshrined as current orthodoxy in discussions of the modal logic of plurals. Let $xx$ be some things; then, goes one half of the claim, if $x$ is one of $xx$ then necessarily: if $xx$ exist, then $x$ is one of $xx$. Similarly, if $x$ is not one of $xx$ then necessarily: if $xx$ exist, then $x$ is not one of $xx$. The path from love to logic is a short one indeed.\\

Why should this be of any more than esoteric interest? The principle, which sometimes get calls \textit{plural rigidity}\footnote{I will object to this usage below.} is put to work to important and controversial metaphysical ends. Crucially, Williamson deploys it in arguing both for necessitism (the doctrine that ontology is modally invariant) and for a property-based interpretation of second-order quantification (since he thinks the modal behaviour of plurals rules out a Boolos style plural interpretation) (\cite[]{williamson:13}) (\cite{williamson:10}) (\cite{boolos:98b}). Linnebo develops a foundational programme for set-theory in a modal plural logic strengthened by the addition of statements equivalent to the principle, and whilst he himself proposes a non-standard interpretation of the modalities, its acceptablity with respect to metaphysical modality would provide a fall-back position for someone sympathetic to the approach (\cite{linnebo:13}). Relatedly, so-called rigidity would prevent a development of Hellman's modal structuralism about mathematics using a plural interpretation of second-order logic (\cite{hellman:89}).\\

A number of current philosophical projects, then, turn on this particular claim about the modal behaviour of pluralities\footnote{Here, and throughout, `plurality' is a convenient singularising locution, as is standard in the literature: a plurality is \textit{some things}.}. Yet it is often adopted swiftly without the sustained argument which might reasonably be thought to be required to sustain a claim with such wide-ranging consequences. Recently, Linnebo has made in the direction of making good this deficiency (\cite{linnebo:16}). The present paper argues that Linnebo fails to supply sufficient reasons to adopt the claim at issue, and that in the absence of alternative arguments a question mark must hang over standard appeals to strong modal logics of plurals by philosophers. Of necessity, given the current state of the literature and the importance of Linnebo's work, much of what follows consists of a close criticism of Linnebo, but I go on to suggest that progress in the debate requires us to get clear about the relationship between language and non-linguistic reality and to pay close attention to linguistic use. These thoughts look likely to generalise across the philosophy of modal logic and to call into question much of the present fashion for modalising.\\

I will assume a working knowledge of customary notation and formation rules for plural logic. Good introductions are available in the form of \cite{linnebo:12} and \cite{rayo:07}. A brief overview is provided here in an appendix. Whilst it is standard to offer informal interpretations of modal languages by means of possible-worlds, as the appendix here does for the sake of brevity, the metaphysical associations of this are unhelpful in the present context and present the danger of building in a bias towards semantic realism that sits uncomfortably with suggestions I want to make. I will therefore deploy the less-worn language of truth \textit{with respect to possibilities} or else simply interpret modal operators homophonically. After all, if we do not have a working understanding of expressions such as `necessarily' and `possibly' what do we take modal logics to be interpreting\footnote{I take `modal logic' here to have the sense customary in philosophy, where interpretation is restricted to alethic, and in particular often metaphysical, modalities. Of course, considered as mathematical formalisms, modal logics are interesting in themselves and have a wide variety of applications. For a survey and introduction see \cite{blackburn:02}. Regarding interpretation, I find the suggestion that Kripke semantics -- undoubtedly useful for simplifying proof technique, through construction of counter-models and the like -- shed light on hitherto unanswered questions about the \textit{meaning} of modal operators quite incredible. Nonetheless, my impression is that it is part of philosophical folklore. It is made explicit at \cite[121]{priest:15}.}?\\

The paper proceeds as follows. \S1 lays out the issue to be discussed, and undermines initial motivation for it. \S\S2-4 work through Linnebo's formal arguments, diagnosing a common justificatory circularity. \S5 situates the sceptical position thus arrived at methodologically. I urge caution in the invocation of such a strong modal claim for philosophical argument in the absence of more persuasive arguments for its truth. I suggest that more attention needs to be paid to the use of language in making assertitions and performing deductions as a key, and perhaps the only, constraint on deciding the acceptability of candidate logical principles. This position looks likely to have implications beyond the present case, and to call into question standardly accepted modal axioms.

\section{}

What Linnebo terms \textit{plural rigidity}, we will call \textit{plural necessitism}. The expression is still not perfect (it might suggest a deeper association with Williamson's metaphysics than can be supported), but at least avoids the danger of use-mention confusion that attaches to using the word `rigidity' of a principle stated in the material, rather than formal, mode\footnote{Linnebo himself is keenly aware of this danger (\cite[7]{linnebo:16}).}. Plural necessitism is equivalent to the conjunction of:
\begin{equation}
\tag{NecInc} x\prec yy\rightarrow\Box(Ex\wedge Eyy\rightarrow x\prec yy) \label{NecInc}
\end{equation}

\begin{equation}
 \tag{NecNInc}x\nprec yy\rightarrow\Box(Ex\wedge Eyy\rightarrow x\nprec yy) \label{NecNInc}
\end{equation}

\noindent Here `$E$' is to be read as an existence predicate\footnote{Strictly speaking, we may need to be more careful. We have not formally specified a lexicon, and in particular have not settled the question whether predicates may be untyped, in the sense that they may have argument places for which either singular or plural terms are eligible. From the perspective of tractable formation rules, proof theory and model theory, it is likely to be easier to type predicates. If the lexicon is typed, then `$E$' is doing illegitimate double work in \eqref{NecInc} and \eqref{NecNInc} as formulated. We can get round this straightforwardly, by using superscripts to signify the type of existence predicates in the formal specification of the lexicon, and then omit superscripts in practice as a harmless convention.}, allowing us to work within a variable domain framework. It is routine to modify \eqref{NecInc} and \eqref{NecNInc} for a fixed domain framework simply by removing the conjunction sign and the second conjunct from the antecedent of the second conditional of each formula. In what follows, versions of the principles with and without existence predicates will be denoted by `\eqref{NecInc}' and `\eqref{NecNInc}', with context serving to disambiguate.\\

As we have seen, plural necessitism plays a non-negligible part in motivating or supporting metaphysically contentious positions. What reason, then is there to believe that it is true? It is provably independent of any normal modalisation of \textbf{PFO+} (\cite{hewitt:12}). Motivation will therefore have to take the form of formal argument from independently compelling premises or of informal arguments.\\

As advertised above, our engagement will be mainly with the formal arguments Linnebo offers in support of plural necessitism (hereafter \textit{pn}). Before that, some comment should be made on a case for the plausibility of \textit{pn} which Linnebo develops by analogy with an argument for the modal invariance of set membership, based on extensionality and the necessity of identity\footnote{The case for the necessity of identity invoked in that developed by Barcan Marcus in (\cite{marcus:61}).}. This proceeds from the principle:

\begin{equation}
 \tag{Indisc} xx\equiv yy\rightarrow(\phi(xx)\leftrightarrow\phi(yy)) \label{Ind}
\end{equation}

The subformula `$xx\equiv yy$' abbreviates `$\forall u(u\prec xx\leftrightarrow u\prec yy)$'. This relieves us from the need to read `$\equiv$' as denoting identity on pluralities. This is dialectically useful, since the extent of the similarity of behaviour (in modal contexts) between the referrents of singular and of plural terms is what is at issue. From \eqref{Ind} is derivable\footnote{\textit{Proof sketch}: (1) $xx\equiv yy$ (Ass for CP), (2) $\phi(xx)\equiv\phi(yy)$ ((1, Indisc)), (3) $(xx\equiv xx)\leftrightarrow\Box(xx\equiv xx)$ (Thm), (4) $(xx\equiv yy)\leftrightarrow\Box(yy\equiv xx)$ (3, 2), (5) $\Box(yy\equiv xx)$ (1, 4, MP), (6) $xx\equiv yy\rightarrow\Box(xx\equiv yy)$ (1-5, CP).} :

\begin{equation}
 \tag{Cov} xx\equiv yy\rightarrow\Box(xx\equiv yy) \label{Cov}
\end{equation}

Now \eqref{Cov} does not entail \textit{pn}, since it is compatible with pluralities `drifting' modally. This is easiest to understand using a possible worlds heuristic. Suppose that, at some world $w_{1}$, $xx$ just are $yy$. In order for \eqref{Cov} to be satisfied, we require that at any world accessible from $w_{1}$, $xx$ just are $yy$. But now consider $w_{2}$, accessible from $w_{1}$, such that $xx$ and $yy$ both consist of the members of those pluralities at $w_{1}$ with the addition of $a$, which does not exist at $w_{1}$. We have here no counter-example to \eqref{Cov}, but \textit{pn} fails\footnote{Compare the counter-model to (NecInc) provided (within a fixed domain semantics) in (\cite{hewitt:12}).}.\\

Linnebo takes this drift to be indicative of \textit{soft extensionalism} about pluralities. This is, staying with the possible worlds talk, the doctrine that extensionality holds within worlds but not across them. He argues that this is an unstable position, for the same reason that he holds the parallel position regarding sets to be unsuccessful. With respect to the latter, he draws a comparison between sets and \textit{groups}, where a group is understood as a social entity, such as the Supreme Court justices or the Celtic first eleven:\\
\begin{quote}
 The only reason to accept the principle of extensionality [for sets]$\ldots$ is that a set, unlike a group, is fully specified by its elements. Thus, when tracking a set across possible worlds, there is nothing other than the elements to go on. This ensures that the tracking is rigid. By contrast, when tracking a group, there \textit{is} more than the members to go on$\ldots$ These considerations give rise to a dilemma that applies not only to sets but to any other notion of collection: either we have to give up the principle of extensionality, or else we have to accept the rigidity principle. There is no stable middle ground. Kripke famously taught us that there can be no `soft identity theory' in the philosophy of mind, according to which mental states are identical with physical ones, but only contingently so, only the `hard identity theory' committed to necessary identity. Our present conclusion is analogous. There can be no `soft extensionalism' concerning sets of other kinds of collection, only `hard extensionalism' that incorporates the rigidity claims and the idea of transworld extensionality that they embody. (\cite[6]{linnebo:16}) 
\end{quote}

This is surely correct as regards sets: as Boolos once put the point, if anything deserves to be called analytic of our concept of set it is the axiom of extensionality (\cite[]{boolos:98f}), and the move from this recognition to the claim that possession of all and only its actual elements is an essential property of any given set is a natural one. The present issue, however, is whether this form of reasoning transfers across to the plural case. And here a number of questions arise. Most fundamentally, it is not entirely obvious that pluralities need be regarded as extensional in any sense: the temptation to think this as immediate as it is in the set-theoretic case arises from the truism `a plurality just \textit{is} its members\footnote{It is a nice, and generally ignored question, how precisely the `is' in this sentence should be understood. It looks like it cannot be an identity predicate on pain of violating type restrictions (and thereby introducting the threat of a form of Cantor's paradox).}'. So far, so uncontroversial: the cutlery set represents no addition of being over and above the knife, the fork, and the spoon. Does extensionality follow? Unspoken in the background is the assumption that the world is `carved up' into pluralities prior to, and independent of, our practices of referring to, and quantifying over things plurally (what we might call \textit{plural realism}). Perhaps there are good reasons to think this. But were plural realism denied in favour of the view, say, that a plurality just is some things \textit{as picked out by an expression with a particular sense}\footnote{I take it that the view sketched here would sit comfortably with a development of the conditional direct reference theory developed in (\cite{hewitt:12})}, extensionality would fail. This would, amongst other things, open the door to the denial of the plurality/ group distinction\footnote{A group being a collection of people picked out by some role or feature, such as the Supreme Court justices. See \cite{uzquiano:04}.}, a move which in addition to ontological economy\footnote{Although an alternative view, congruent with recent advocacy of easy ontology would concede the existence of groups, since identity conditions for groups can be supplied in terms of the corresponding pluralities, but deny their explanatory role \cite{thomasson:15}.} has in its favour the undoubted fact that (what philosophers generally take to be) many groups are picked out by noun-phrases linguistically indiscernible from plural NPs. At this point questions would likely be raised about whether commitment to pluralities is any less costly than commitment to objects \cite[]{linneboflorio:15}.\\

Let's grant extensionality for pluralities then, at least for the sake of argument: problems remain with the argument. It is taken for granted that extensionality is captured  by \eqref{Ind}, where modal operators may occur within the substitutends for the sentential metavariables. Should somebody not antecedently disposed to accept \textit{pn} accept this characterisation of plural extensionality? Here is one reason why they ought not to: a non-singularist meaning theory for plural vocabulary could hold, consistently with the rejection of a parallel view for singular terms, that our referential practices with plurals are fine-grained, allowing the pluralities we track with them to come apart modally. After all, they might insist, openess to this kind of proposal is part of what it is to take plurals seriously. And this being so, allowing modal operators to be substituted within \eqref{Ind} is question-begging.\\ 

None of this is decisive either way. Sceptical doubt has been introduced to Linnebo's plausibility argument, which ought perhaps to cause us to question whether we can formulate a logico-metaphysical account of pluralities independent of developing a better understanding of the function of plural expressions in language. We'll return to that point in due course. Perhaps, though, we can short-circuit that imperative for present purposes. Might it not be that formal arguments for \textit{pn} are available, showing that the contested principle follows from commonly accepted (or at least plausible) rules governing plurals? Linnebo thinks so, and supplies three arguments to that effect. To these our attention now turns.

\section{}

Here and in what follows, we will work in the logic \textbf{PFO+} enriched with a singular existence predicate. For details of \textbf{PFO+}, see the appendix. We will allow ourselves the free use of modal operators, and leave the syntax of these intuitive\footnote{Although we'll take it that the background modal logic is normal}. Linnebo's first argument for \textit{pn} we will call \textit{Uniform Adjudication (UA)}. Versions of UA are supplied for \textbf{PFO+} with and without an existence predicate. Since nothing important turns on which argument is at issue, we will focus on that without the existence predicate for the sake of simplicity.\\

Add to the language of \textbf{PFO+} a dyadic plural term forming operator `+', which takes a singular term in its first argument place and a plural term in its second. The intended interpretation of this operator is as denoting \textit{adjunction}, the operation of `adjoining one object to a plurality' \cite[11]{linnebo:16}. The following seems  plausible as a principle governing adjunction:

\begin{equation}
 \tag{UniAdj} \Box\forall x(x\prec xx+a\leftrightarrow x\prec xx\vee x=a)
\end{equation}

Linnebo doesn't provide an argument for (Uniadj), but one may be easily be found. The principle is, after all plausibly understood as (implicitly) definitional of the adjunction operator, and as such analytic and therefore necessary\footnote{Those beset by Quinean scruples about analyticity are invited to reformulate this argument.} Given (UniAdj), the next stage in (UA) is to assume $a\prec xx$. This gives us $xx\equiv xx+a$ by (UniAdj). We are licensed to necessitate this by (Cov), yielding:
\begin{equation}
 \label{Lin1} \Box(xx\equiv xx+a)
\end{equation}
The next stage is to argue that (UniAdj) yields:
\begin{equation}
\label{Lin2}  \Box(a\prec xx+a)
\end{equation}
According to Linnebo (UniAdj) \textit{entails} \eqref{Lin2} within the scope of our assumption, but no proof is provided. Let's talk through how one might go. Instantiating the bound variable with $a$, application of \textbf{K} to the R-L direction of (UniAdj) delivers:
\begin{equation}
 \label{Hew1} \Box(a\prec xx\vee a=a)\rightarrow\Box(xx+a)
\end{equation}
The necessity of identity and \textit{modus ponens} give us \eqref{Lin2}. Returning to (UA), with \eqref{Lin2} under our belts, we can appeal to \eqref{Lin1} and get $\Box(a\prec xx)$. We discharge the assumption and we are done.\\

The problem with (UA) as a case for (NecInc) -- remember that nothing here turns on the presence of an existence predicate -- is the invocation of (Cov) to necessitate $\eqref{Lin1}$. Recall that (Cov) is derived using (Indisc):

\begin{equation}
 \tag{Indisc} xx\equiv yy\rightarrow(\phi(xx)\leftrightarrow\phi(yy)) 
\end{equation}

In the context of an argument for a component of \textit{pn} this looks problematic. In order to get the derivation of (Cov) we need modal vocabulary to be admissible in the substitutends for the sentential metavariables. But it is doubtful than anyone who doubts \textit{pn}, and so anyone with worries about (NecInc) should allow this. For this admission expresses formally the doctrine that pluralities are \textit{extensional}. For sure, as the discussion above noted, this need not be a \textit{hard} extensionality; but then perhaps Linnebo is correct in saying that soft extensionalism is an unstable position. More fundamentally, however, we minuted reasons to doubt that pluralities are extensional in \textit{any} sense. And even granting that, we observed that the admission begs questions about the constraints on the semantic behaviour of plurals in modal contexts. This is a defeating worry for the use of (Cov) in (UA). Someone not antecedently committed to \textit{pn} ought only to admit substitutends in (Indisc) that do not contain modal operators. This is not enough to derive (Cov), so (UA) is not available to the non-\textit{pn} believer. But then, in particular, it is not available when arguing which a non-\textit{pn} believer; which is to say it lacks the required suasive force.

\section{}

The second argument is owing to Williamson, and termed by Linnebo \textit{partial rigidification} (PR) \cite[699-700]{williamson:10}. Considering a fixed domain context first, this proceeds from an instance of plural comprehension:

\begin{equation}
 \exists yy(xx\equiv yy\wedge\forall x(x\prec yy\rightarrow\Box x\prec yy))
\end{equation}
$yy$ here is a \textit{partial rigidification} of $xx$. Now assume $a\prec xx$ and consider $yy$, the partial rigidification of $xx$. This gives us $\Box a\prec yy$. Together with $\Box(xx\equiv yy)$, which we have by (Cov), the derivation of $\Box a\prec  xx$ is immediate. Discharging the assumption gives us (NecInc).\\

Linnebo considers this persuasive, but of limited value, since the fixed domain assumption is unlikely to appeal to anyone not signed up to Williamsonian necessitism\footnote{On which, see \cite{williamson:13}.}. I doubt even that (PR) is of use in arguing for \textit{pn} even \textit{modulo} a fixed domain, because like (UA) it appeals to (Cov), and as we have seen this looks question-begging. Things look unfavourable also when (PR) is adapted for variable domains. To do this, we use a different instance of plural comprehension:

\begin{equation}
 \exists yy\bigl(xx\equiv yy\wedge\forall x[x\prec yy\rightarrow\Box(Eyy\rightarrow\Box x\prec yy)]\bigr)
\end{equation}

We proceed in a similar fashion as for the fixed-domain case, this time concluding that $\Box(Eyy\rightarrow a\prec yy)$ on the assumption that $a\prec xx$. Appealing to (Cov) would get us $\Box(Exx\prec a\prec xx)$, allowing us to discharge and get (NecInc) only given that $\Box(Exx\rightarrow Eyy)$, but as Linnebo himself acknowledges this is not something that somebody who rejects \textit{pn} is likely to allow. If one is prepared to grant that $yy$ are actually all and only $xx$, but contingently so, what stands in the way of granting the possibility that $yy$ exist but $xx$ do not?\\

For this reason Linnebo takes (PR) to have force against the denier of \textit{pn} in a fixed domain, but not in a variable domain context. He is certainly correct regarding the latter case, but it is unclear that the argument is any more convincing when the domain is fixed. As we saw, in that case (PR) still relies on (Cov), and at this point the \textit{pn}-denier ought to refuse to follow. We arrive here at a recurring theme throughout the present paper; formal arguments for \textit{pn} are less successful than even Linnebo, a sober and balanced commentator, allows. The principle does not appear susceptible to proof from premises which are not too close to the principle itself for the proof to have suasive force. This will become more apparent when we consider the third and final of Linnebo's formal arguments.

\section{}
Say that a plurality is traversable just in case its members can be exhaustively listed. This is straightforward in the case of a finite plurality. For example, let $xx$ be a plurality traversed thus:

\begin{equation}
\label{trav} \forall x(x\prec aa\leftrightarrow x=a\vee x=b\vee x=c)
\end{equation}
Linnebo says that we can assert \textit{uniform traversability}, by which is meant the necessitation of wffs such as (\eqref{trav})\footnote{Note that the principle of uniform traversability for finite pluralities is not statable in the object language.}, thus,
\begin{equation}
 \Box\forall x(x\prec xx\leftrightarrow x=a\vee x=b\vee x=c)
\end{equation}

In the context of an argument for \textit{pn} this might look fatally question-begging, a point to which we'll return in due course. First we need to deal with the case of infinite pluralities. Here substantial infinitary resources are required\footnote{The language of $\mathcal{L}_{\infty\omega}$ expands that of first-order logic with identity by admitting disjunctions and conjunctions of any transfinite length, whilst allowing only finite blocks of quantifiers. We assume, moreover, that the language is equipped with a proper class of individual constants.}. Working in plural $\mathcal{L}_{\infty\omega}$, and adopting the notation of \cite{hewitt:12}, uniform traversability in this case may be stated with $i$ ranging over sufficient ordinals to index the members of $xx$,

\begin{equation}
 \Box\bigl(x\prec xx\leftrightarrow \bigvee_{\forall i}(x=a_{i})\bigr) \label{necdis} \tag{NDIS}
\end{equation}

Given \textbf{T} in the background modal logic we can prove\footnote{For details see \cite[]{hewitt:12}.} $\Box y\prec xx$ on the assumption that $y\prec xx$, giving us (NecInc) for fixed domains. A similar argument is forthcoming in when an existence predicate is predicate, giving us (NecInc) \textit{simpliciter}. However, as Linnebo himself admits,

\begin{quote}
 [the] premise of universal traversability is little other than an infinitary restatement of our target claim that a plurality is fixed in its membership as we shift our attention from one possible world to another. \cite[14]{linnebo:16}
\end{quote}

As advertised, there is little reason to believe that an assertion of uniform traversability will do anything to convince anyone not antecedently given to assent to \textit{pn}. This much Linnebo himself concedes. However, the other formal arguments he deploys are in equally bad shape, as we have seen. In each case the complaint is the same: the argument will not persuade somebody not already disposed to assent to \textit{pn}, since at a crucial stage it deploys an argument that such a person has no reason to accept. We know that (NecInc), and therefore \textit{pn}, is independent of any normal modalisation of \textbf{PFO+}, so to the extent that these logics capture pre-formal reasoning about plurals and modality our target doesn't fall out of principles governing thought. What is more \textit{pn} has been questioned by a minority of philosophers. Still, the author's impression is that the great majority assent to it. Given the use to which the principle is put in philosophy, it would be unfortunate if it lacked adequate justification; yet the position we have reached suggests that this might be a live possibility. Can more be said?

\section{}

We might be tempted to think that too much is being conceded to scepticism. After all, a plural variable `$xx$' is supposed to formalise a natural language expression such as `these things'. But, one can imagine an exasperated interlocutor protesting, if \textit{this} thing is one of \textit{these} things then \textit{of course} it is necessarily the case that this is one of \textit{these things}, or else they wouldn't be \textit{these things}. Unfortunately, this is simply an affirmation of faith in place of an argument: the claim that `these things' designates all and only these things with respect to a counterfactual situation is implicit in `or else they wouldn't be these things'. The expression `these things' is being used to justify the claim that of necessity these things are all and only these things. Yet it can only do this on assumption that `these things' is semantically rigid in the sense that\footnote{The term `(semantically) rigid' used of a plural NP is dangerously ambiguous. My usage here has it that $\alpha\alpha$ is rigid iff (a) if $x$ is one of the things designated by $\alpha\alpha$ then $x$ is one of the things designated by $\alpha\alpha$ with respect to any world at which $\alpha\alpha$ designates some things and $x$ exists; and (b) if $x$ is not one of the things designated by $\alpha\alpha$ then $x$ is not one of the things designated by $\alpha\alpha$ with respect to any world at which $\alpha\alpha$ designates some things and $x$ exists. An alternative usage would have it that simply $\alpha\alpha$ is rigid iff $\alpha\alpha$ designates the same plurality with respect to any world at which $\alpha\alpha$ designates any plurality. My view is that latter claim is utterly harmless, indeed uninformative, since we have no grasp of what pluralities \textit{are}, what it is for some things to be considered as many, apart from our use of plural language. Pluralities just are what plural expressions designate.} if it designates \textit{anything} with respect to a counterfactual situation then it designates all and only the things it actually designates. This semantic presupposition is not \textit{pn}, to identify the two would be a gratuitous confusion of use and mention; however it is clearly closely related. As we shall now see.\\

The formal logic of plurals is of interest because it allows us to codify plural concepts we express in natural language and to capture the rules of deduction governing these. Were \textbf{PFO+} merely a mathematical system, understood formalistically, then there would be no barrier to importing any interaction with modal operators we care to mention in order to extend the system. For as long as our concern is mere symbolic games, one game is as good as another. But on the intended interpretation of \textbf{PFO+} as a \textit{plural} logic, the metatheoretic statement which appropriately formalises the claim that $\alpha\alpha$ is semantically rigid just in case if it designates \textit{anything} with respect to a counterfactual situation then it designates all and only the things it actually designates is supposed to model a claim that could be made about natural language plurals. And a meaning theory for a natural language should be statable in (the same) natural language itself, worries about the semantic paradoxes notwithstanding\footnote{I agree with Priest about this much, although dissent from his dialethic response to paradox \cite{priest:87}. Rather I would want to follow Rumfitt in maintaining that paradoxical sentences do not succeed in making statements \cite[]{rumfitt:15}.}. But now it is clear that the move to the metalanguage imposes no rigidity on object language plurals that wasn't already implicit in the statement of object language \textit{pn} itself, for ultimately the semantic principle for the formal language should impose nothing not present in the natural language which is our  object of investigation. In that language: `these things', used in the statement of a theory of meaning for the language, means \textit{these things}. The constraint that meaning be homophonic ensures that the invocation of the metatheory in the fashion proposed by the idignant objector of the previous paragraph gives no new suasive purchase.\\

Where does this leave us with respect to \textit{pn}? If a widely accepted logic of plurals doesn't deliver \textit{pn} as a theorem, no matter which (normal) modal logic it is combined with, and if obvious formal arguments departing from the accentuation of plural logic with metaphysico-semantic principles concerning plurals, what possible motivation could there be for adopting \textit{pn}? A temptation at this point is to appeal to more robustly metaphysical considerations about the nature of pluralities. Provisional on metaphilosophical choices minuted below, this gives the impression of an admission of defeat under the guise of further enquiry. For whatever the nature of pluralities might be (and here I already fear that language is packing its suitcase, if not quite yet on holiday: pluralities are not \textit{entities}, what could it be for them to have a \textit{nature}?), it is surely something that we approach through our canonical means of latching on to them, that is through the use of plural language. How else can I explore what it is to be a plurality other than through attention to our plural talk and reasoning? Do not be misled by comparisons with the scientific study of entities of some particular class. I can explore the nature of water through using an electron microscope: my use, notoriously, will not settle the question. But `plurality' is not a sortal of the same kind as `water': it expresses a general, formal, concept applicable across a wide range of collections of entities of all sorts and categories. In its formality it belongs with `object', rather than `positron'; and the prospects for a substantial enquiry into the nature of objects, as opposed to a logico-semantic making lucid of the place of objects in the structure of the world as approached through language, are not good.\\ 

We cannot step outside of language to study the general structure of the world in its metaphysical purity, whatever that would involve. Such is our logocentric predicament\footnote{For the record, I don't see it as a \textit{much} of a predicament. It seems to be a serious problem for as long as we are caught up in a picture according to which language somehow distorts our access to reality. But the picture is not compulsory. We can, alternatively, view language as our \textit{means} to grasp reality conceptually, to make its contents the objects of reasoning. As Dummett puts it, `language may be a distorting mirror, but it is the only mirror we have' \cite[6]{dummett:14}.}. What is it about the use of plural terms -- `these students', `the Channel Islands', `Pixies' (if we take the last to be plural, and not a singular name for a \textit{group}) -- that licences the application of \textit{pn}? This, it seems to me, is the only question that could give us a good reason to adopt the principle. And here there nothing in use that proponents of the principle have thus far rallied to its support, except with respect to the proper sublplurality of plural terms consisting of compound names -- `Ant and Dec', `2 and 17' -- and these are not the contested cases\footnote{In many cases, the reference of `we' is anaphoric for that of a compound name. Unfortunately this makes the cute example with which the present paper began something of a ladder to be taken away once the dialectic is going.}. Moreover, there is some evidence from usage against \textit{pn}: `You should be careful what you are saying; Smith could have been one of those men', to adapt an example of Dorothy Edgington's\footnote{Cited in \cite[120-1]{rumfitt:05}.}. To complain that here `those men' is associated with descriptive content (`the men talking in that corner at this party') is simply to shift the dialectic, and not in a manner that looks likely to favour \textit{pn}. Are plural terms associated with some kind of intensional content, a sense distinct from their reference? What would decide this issue? A prime datum here seems to be precisely the kind of case Edgington proposes\footnote{Note that we are dealing here with a plural \textit{demonstrative}, so comparisons with singular descriptions such as `the number of planets' to motivate the thought that `those men' is not a genuine term are illicit. Against the objection that the sortal supplies descriptive content we can reply that (a) it doesn't supply \textit{enough} content to secure \textit{pn}, and (b) There is a case to be made that \textit{all} terms, including singular names, are associated with sortals \cite[]{}\cite[]{}, so this line of objection is in danger of generalising uncontrollably.}, and this supports not \textit{pn} but its negation, since it provides a \textit{prima facie} example of an instance for which the universal claim fails. \\

I do not take this to be decisive. Perhaps there is an argument to be had in favour of \textit{pn} that answers this objection. It might be, moreover, that reasonable constraints on a meaning theory motivate acceptance of the contested principle: what happens, for example, if we formalise modal plural logic along natural deduction lines, imposing the familiar requirements of harmony and conservativity on rules for logical vocabulary and treating plural terms correspondingly in an inferentialist fashion? The point is merely that the work of motivating \textit{pn} philosophically remains to be done, and that in the absence of such work the appeal to the principle in support of strong metaphysical theses is illegitimate.\\

The frustrated defender of \textit{pn} has another avenue of response. Isn't to seek to ascertain the truth of the matter as regards \textit{pn}, as we have been doing, in isolation from the project of developing our best overall theory of the world to succumb to methodological error? If the benefits of adding \textit{pn} to one's theoretical arsenal outweigh the costs, for example in terms of sitting uncomfortably with current usage, then one should adopt the principle. This after, all is how science proceeds, and metaphysics and logic are part of total science. Thus, for example, Williamson has advocated a `modal science' and has more generally stressed the methodological continuity between philosophy and science \cite{} \cite[]{}. This position is, to my mind, unconvincing. The disanalogies between metaphysics and the natural sciences are legion, most obviously in the failure of metaphysical theories to make empirically testable intuitions. It might be retorted that metaphysical theories can be tested against \textit{intuitions}, perhaps by means of thought experiments, these providing something analogous to the experiments of natural scientists. The role of intuitions in philosophy has been subject to a good deal of critical scrutiny of late \cite{}\cite{}\cite{} and the analogy between thought experiments and experiments \textit{simpliciter} has been questioned, rightly in my view. However, even within the general methodological framework in question there would seem to be particular concerns about appeals to intuition to support \textit{pn}. For here, if we are not to confine our attention to imagined scenarios of the Edginton type (which hardly offer unambiguous support to \textit{pn}, we will be arguing about intuitions and /or counterfactual judgements concerning either the truth of sentences expressed using English plural terminology (`In this situation, this would be one of these') or philosophical terms of art (`plurality'). In the first case, the above mooted accusation of circularity recurs; in the latter, the supposed role of intuition recedes into the background, and we are left simply with \textit{a priori} metaphysical disputation of the old school, with no obvious similarity to the natural sciences or any other uncontroversially truth-conducive enterprise. Either way, moreover, there is a real danger of dominant views amongst current metaphysicians on \textit{pn} reinforcing themselves by appeal to the intuitions of the already-convinced, a procedure that is of no epistemic value.\\

If what I have suggested above is correct with respect to \textit{pn} then it looks likely to generalise beyond the case of \textit{pn} to other modal intuitions to which common and important appeal is made in philosophy. It is commonplace for \textbf{S5}, or at least \textbf{S4} to be appealed to as the `right' modal logic, and for metaphysical investigation to proceed with one of these as the background modal logic. There are disimilarities with the case of \textit{pn}: notably appeals to model theoretic semantics and to counterfactuals in support of \textbf{S5}. In both cases, I contend that circularity worries similar to those noted about arguments for \textit{pn} arise. Establishing that will have to wait for a subsequent paper. What is clear, however, is that in the absence of these arguments support for strong modal logics will rest on judgements concerning iterated modalities that are at least as precarious as those concerning expressions of \textit{pn} in natural language. Perhaps we should just be more modest in our claims about modality.\\

\appendix
The logic \textbf{PFO+} extends first-order logic with identity. To the lexicon we add:

\begin{itemize}
 \item Denumerably many \textit{plural variables}: `$xx_{1}$', `$xx_{2}$'$\ldots$
 \item Denumerably many \textit{plural constants}: `$aa_{1}$', `$aa_{2}$'$\ldots$
 \item Denumerably many (monadic) \textit{plural predicates}: `$FF_{1}$', `$FF_{2}$'$\ldots$
 \item The logical dyadic predicate `$\prec$', which we read `is one of'.
\end{itemize}
Customary abbreviations and modifications of notation are allowed. We add new formation rules:
\subsection{Atomic wffs}
Where $t$ ranges over singular, and $tt$ plural, terms:
\begin{itemize}
 \item $\ulcorner t\prec tt\urcorner$ is a wff.
\end{itemize}
\subsection{Molecular wffs}
Where $\phi$ is a wff, $vv$ a plural variable, and with the usual constraints to avoid clash of variables:
\begin{itemize}
 \item $\ulcorner\exists vv\:\phi\urcorner$ is a wff.
 \item $\ulcorner\forall vv\:\phi\urcorner$ is a wff.
\end{itemize}
Nothing other than elements of the closure of the atomic wffs under the formation rules is a wff.
\section{Proof system}
To any proof system for first-order logic with identity we add the axiom:\\

\noindent\textbf{Nonemptiness}: $\forall xx\exists y\: y\prec xx$\\

And the axiom schema:\\

\noindent \textbf{Comprehension}: $\exists x\: \phi\rightarrow \exists xx\forall x\: (x\prec xx\leftrightarrow \phi)$\\

We can also introduce the defined dyadic plural predicate `$\equiv$', which we read `are the same things as':\\

\noindent\textbf{Extensionality}: $\forall xx\forall yy\: xx\equiv yy\leftrightarrow \forall x(x\prec xx\leftrightarrow x\prec yy)$\\

\section{Model-theoretic semantics}

Following \cite{hewitt:12} we will develop a model theoretic semantics for the language of \textbf{PFO+}. We will restrict our attention to fixed domains and show that, even with this restriction, (NecInc) is not valid. We will then add an existence predicate and consider variable domains. We will see that this does not affect the result.\\

Let a frame $\mathcal{F}=(S,R)$, with $S$ non-empty and $R$ a relation on $S$. A model $\mathcal{M}$ on $\mathcal{F}$ is  $\mathcal{M}=(S,R,D,I)$, with $D$ non-empty and $I$ a function that makes assigments to the non-logical vocabulary:
\begin{itemize}
 \item To each \emph{singular constant} some $d\in D$.\\
\item To each \emph{n-adic predicate}, for each $s\in S$, some $d\subseteq D^{n}$.\\
\item To each \emph{plural constant}, a non-empty set  $(x,y) x,y\in P$\ s.t. $\forall (x,y)\in P\: x\in S\wedge y\subseteq D\wedge \forall z\: (<x,z)\in P\rightarrow z=y)$\\
\item To each \emph{plural predicate}, for each $s\in S$ some $p\subseteq \wp(D)$.
\end{itemize}

Given a model, we define a valuation function $v$, which assigns to each singular variable $x$, $v(x)\in D$ and to each plural variable $xx$, $v(xx)\subseteq S\times(D-\emptyset)$, s.t. $\forall s\in S\: ((s,a)\in v(xx)\wedge (s,b)\in v(xx))\rightarrow a=b$.\\

Satisfaction can be defined. Writing $\mathcal{M}, s\Vdash_{v} \phi$ to indicate that $v$ and $\mathcal{M}$ satisfy $\phi$ at $s$ we proceed as follows, with $t$ ranging over singular terms.

\begin{enumerate}
 \item For an n-adic predicate `$F$', $\mathcal{M}, s\Vdash_{v} Ft_{1}\ldots t_{n}$ iff $(v(t_{1}\ldots v(t_{n}))\in I(F,s)$.
\item For a plural constant `$aa$', $\mathcal{M}, s\Vdash_{v} t\prec aa$ iff .$\exists (m,n)\in I(aa)\: (m=s\wedge t\in n)$
\item For a plural variable `$xx$', $\mathcal{M}, s\Vdash_{v} t\prec xx$ iff $\exists (m,n)\in v(xx)\: (m=s\wedge t\in n)$.
\item For a plural predicate `$FF$' and a plural constant `$aa$', $\mathcal{M}, s\Vdash_{v} FFaa$ iff $\exists (m,n)\in I(aa)\: (m=s\wedge n\in I(FF,s)$.
\item For a plural predicate `$FF$' and a plural variable `$xx$', $\mathcal{M}, s\Vdash_{v} FFaa$ iff $\exists (m,n)\in v(xx)\: (m=s\wedge n\in I(FF,s)$.

\end{enumerate}

Compound wffs are dealt with recursively:

\begin{enumerate}
 \item $\mathcal{M}, s\Vdash_{v} \neg\phi\text{ iff }\mathcal{M},s\nVDash_{v}\phi$.
\item $\mathcal{M}, s\Vdash_{v} (\phi\wedge\psi)\text{ iff }\mathcal{M},s\Vdash_{v}\phi\text{ and }\mathcal{M},s\Vdash_{v}\psi$.
\item $\mathcal{M}, s\Vdash_{v} \Box\phi\text{ iff }\forall u\in S\text{ if }sRu\text{ then }\mathcal{M}, u\Vdash_{v} \phi$.
\item $\mathcal{M}, s\Vdash_{v} \forall x\: \phi\text{ iff for every valuation }w\text{, which differs from }v$\\ $\text{ at most with respect to the assignment to }`x', \mathcal{M}, s\Vdash_{w} \phi$.

\item $\mathcal{M}, s\Vdash_{v} \forall xx\: \phi\text{ iff for every valuation }w\text{, which differs from }v$\\ $\text{ at most with respect to the assignment to }`xx', \mathcal{M}, s\Vdash_{w} \phi$

\end{enumerate}

A wff is \textit{true} in $\mathcal{M}$ if it is satisfied on every assignment at every world in $\mathcal{M}$. A wff is \textit{valid} relative to a class of frames, if it is true in every model based on a frame from the class. A wff is valid simpliciter if it is true in every model. The concepts of truth and validity can be extended from wffs to theories in the natural way. Consequence is similarly defined naturally.\\

\begin{theorem}
 (NecInc) is not \textbf{K}-valid.
\end{theorem}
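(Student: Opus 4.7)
The plan is to produce an explicit countermodel. Since (NecInc) in the fixed-domain setting reads $x\prec yy\rightarrow\Box(x\prec yy)$, refuting $\mathbf{K}$-validity amounts to exhibiting a model $\mathcal{M}$, a world $s$, and a valuation $v$ at which the antecedent holds but the consequent fails, i.e.\ a successor world $u$ of $s$ at which $x\prec yy$ fails on $v$. The semantics given in the appendix is tailor-made for this: plural variables are assigned sets of pairs $v(xx)\subseteq S\times(D-\emptyset)$, with only a per-world functional constraint, so the ``plurality'' named by $xx$ can literally be a different subset of $D$ at different worlds.

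Concretely, I would take $S=\{s_1,s_2\}$, $R=\{(s_1,s_2)\}$, $D=\{a,b\}$, and pick a valuation $v$ with $v(x)=a$ and $v(yy)=\{(s_1,\{a\}),(s_2,\{b\})\}$. (No interpretation of non-logical vocabulary is needed beyond what is required to make $\mathcal{M}$ a well-defined model on the frame $(S,R)$.) By clause (3) of the satisfaction definition for plural variables, $\mathcal{M},s_1\Vdash_v x\prec yy$ iff there is $(m,n)\in v(yy)$ with $m=s_1$ and $v(x)\in n$; the pair $(s_1,\{a\})$ witnesses this. At $s_2$, the only relevant pair is $(s_2,\{b\})$, and $v(x)=a\notin\{b\}$, so $\mathcal{M},s_2\not\Vdash_v x\prec yy$. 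Since $s_1Rs_2$, the clause for $\Box$ gives $\mathcal{M},s_1\not\Vdash_v\Box(x\prec yy)$, so the conditional fails at $s_1$, and (NecInc) is not true in $\mathcal{M}$, hence not $\mathbf{K}$-valid.

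I expect no genuine obstacle here: the whole force of the theorem lies in the design of the semantics, which deliberately allows the plurality-at-a-world function $s\mapsto\{d:(s,d)\in v(xx)\,\text{for some }d\}$ to vary arbitrarily across worlds subject only to the within-world functional requirement. The only thing that requires a sentence of care is confirming that the constraint ``for each $s$ at most one $n$ with $(s,n)\in v(xx)$'' is satisfied by the proposed $v(yy)$, which it plainly is since the two pairs have distinct first coordinates. One could, if desired, embellish the model with trivial interpretations of the predicates (e.g.\ empty extensions) to make it formally complete, but nothing in the verification uses them.
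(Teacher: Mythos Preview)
Your countermodel is correct and is essentially the same construction as the paper's: a two-world frame with a single accessible successor, a two-element domain, and a valuation that sends the plural variable to different singletons at the two worlds so that $x\prec yy$ holds at the base world but fails at the accessible one. The only cosmetic difference is that the paper takes $R$ symmetric, which is immaterial to the argument.
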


\begin{proof}
 Let $\mathcal{F}=(\{ 0,1\}, R)$. Now consider a model $\mathcal{M}$. Let $R=\{(0,1),(1,0)\}$ and let $D=\{\pi, e\}$. Consider  a valuation $v$  including $v(x)=\pi$ and $v(xx)=\{(0, \{\pi\}), (1,\{ e\})\}$. At 0, `$x\prec xx$' is satisfied, but `$\Box x\prec xx$' is not, since $0R1$ and $\mathcal{M}, 1\nVdash_{v} x\prec xx$. Hence (NecInc) is not satisfied at 0. It follows that (NecInc) is is invalid in $\mathcal{M}$, and thus that it is \textbf{K}-invalid.
\end{proof}

The soundness of the semantics for \textbf{PFO+} is proved by a routine induction on the length of proofs and we omit the details.

\section{Adding an existence predicate}

To work with \textbf{PFO+} in a variable domain context, we first of all add a special predicate `$E$' to the lexicon s.t. $\ulcorner Ett\urcorner$
and $\ulcorner Et\urcorner$ are wffs\footnote{We can think of `$E$' in terms of two predicates, orthographically indiscernible, one singular and one plural. Alternatively, see footnote \textit{v} above.} We modify the axioms as follows:

\noindent\textbf{V-Nonemptiness}: $\forall xx\: (Exx\rightarrow \exists y\: (Ey\wedge y\prec xx))$\\

\noindent \textbf{Comprehension}: $\exists x\: (Ex\wedge\phi)\rightarrow \exists xx\: (Exx\wedge\forall x\: (x\prec xx\leftrightarrow(Ex\wedge\phi)))$\\

We modify the model semantics so that $\mathcal{M}=(S,R,D,I,N)$. $N$ assigns to each world $s$ $N_{s}\subseteq D$, the \textit{inner domain} of $s$. We interpret `$E$' s.t. `$Et$' is satisfied at a world under a valuation iff $v(t)\in N_{s}$; similarly for plural terms iff $v(tt)\subseteq N_{s}$. We can see that (NecInc) still fails to be valid: simply modify the above countermodel such that the inner domain is coextensive with the domain at each world.

 {*}\bibliography{bibliography}
 {*}\bibliographystyle{apalike} 
\end{document}